\newtheorem{theorem}{Theorem}[section]
\newtheorem{definition}[theorem]{Definition}
\newtheorem{example}[theorem]{Example}
\newtheorem{lemma}[theorem]{Lemma}
\newtheorem{proposition}[theorem]{Proposition}
\newtheorem{remark}[theorem]{Remark}
\newcommand{\f}{\mathfrak}
\newcommand{\rarrow}{\rightarrow}
\newcommand{\bb}{\mathbb}
\newcommand{\R}{\mathbb{R}}
\newcommand{\e}{\varepsilon}
\title[Torsion-free $G_{2(2)}^*$-structures with full holonomy]{Torsion-free $G_{2(2)}^*$-structures with full holonomy on nilmanifolds}
\begin{document}

\title[Torsion-free $G_{2(2)}^*$-structures with full holonomy]{Torsion-free $G_{2(2)}^*$-structures with full holonomy on nilmanifolds}

\author[A. Fino, I.  Luj\'{a}n]{Anna Fino and Ignacio
Luj\'{a}n}
\address{Dipartimento di Matematica G. Peano \\ Universit\`a di Torino\\
Via Carlo Alberto 10\\
10123 Torino\\ Italy} \email{annamaria.fino@unito.it}
\address{Departamento de Geometr\'{i}a y Topolog\'{i}a\\
Facultad de Matem\'{a}ticas, Universidad Complutense de Madrid\\
28040 Madrid, Spain}
  \email{ilujan@mat.ucm.es}
\subjclass{Primary 53C10. Secondary 53C25, 53C29, 53C30, 53C50}

\thanks{The first author was  partially supported by GNSAGA (Indam) of Italy. The second author was supported by a FPU grant from the Spanish Ministerio de Educaci\'on, and partially supported by project MTM2011-
22528 from MINECO (Spain).}

%

\begin{abstract}
We study  the existence of  invariant   metrics with   holonomy
$G_{2(2)}^* \subset SO(4,3)$ on compact nilmanifolds, i.e. on
compact quotients of nilpotent  Lie groups by discrete subgroups.
We prove  that, up to isomorphism,  there exists only one
indecomposable nilpotent Lie algebra  admitting  a   torsion-free
$G_{2(2)}^*$-structure  such that the center is definite    with
respect to the induced inner product.   In particular,  we show
that the associated  compact nilmanifold  admits  a $3$-parameter
family of  invariant metrics with full holonomy $G_{2(2)}^*$.
\end{abstract}

\maketitle

\section{Introduction}

The holonomy group of a pseudo-Riemannian manifold $(M, g)$ at a
point  $p \in M$ is defined as the group of parallel transports
along loops based at $p$.  In \cite{Berger} Berger gave a list of
possible holonomy groups of simply connected (pseudo-)Riemannian
manifolds under the assumption that the group acts irreducibly on
the tangent space at $p$. In the list,  the exceptional compact
Lie group $G_2$ appears as the holonomy group of a $7$-dimensional
Riemannian manifold, and its non-compact real form $G_{2(2)}^*
\subset SO(4,3)$, as the holonomy group of a manifold with metric
of signature $(4, 3)$. Bryant proved in \cite{Bryant} that there
exist pseudo-Riemannian metrics with exceptional holonomy groups,
in particular, with holonomy $G_2$ and $G_{2(2)}^*$. Few explicit
examples of signature (4, 3)-metrics with holonomy group
$G_{2(2)}^*$ have been constructed explicitly, see for instance
\cite{CCLS},  \cite{GW} and \cite{Leistner-Nurowski}.

Each Riemannian manifold whose holonomy group is contained in
$G_2$ is Ricci-flat. In particular, if the holonomy group of a
homogeneus space $M$  is contained in $G_2$, then the homogeneous
metric has to be flat. In contrast, in \cite{Kath2} it has been
shown that there exist indecomposable indefinite symmetric spaces
of signature $(4,3)$ whose holonomy is contained in $G_{2(2)}^*$.

In the present paper we  show that there exist compact
nilmanifolds, i.e.  compact quotients of  simply-connected
nilpotent Lie groups  $G$ by  uniform   discrete subgroups
$\Gamma$,  admitting  invariant metrics of signature $(4, 3)$ with
holonomy equal to $G_{2(2)}^*$.  By invariant metric on $G/\Gamma$
we mean  a metric induced by a inner product on the Lie algebra of
$G$. More precisely, we prove that, up to isomorphism, there
exists only one indecomposable $7$-dimensional nilpotent Lie
algebra admitting a torsion-free $G_{2(2)}^*$-structure $\varphi$
such that the center is definite with respect to the induced inner
product $g_{\varphi}$. The Lie algebra has  structure equations
$$\begin{array}{l}
[e_1, e_2] = - e_4, \,  [e_2, e_3] = - e_5, \,  [e_1, e_3] = e_6,\\[2pt]
[e_2, e_6] = - [e_3, e_4] = - [e_1, e_6] = [e_2, e_5] = - 2e_7.
\end{array}$$
For this Lie algebra we exhibit a $3$-parameter family of
(non-symmetric) metrics with full holonomy $G_{2(2)}^*$. This Lie
algebra gives rise to a compact nilmanifold  which inherits a
$3$-parameter family with holonomy
equal to $G_{2(2)}^*$.  

By Nomizu's theorem  \cite{Nomizu} the de Rham cohomology of a
compact nilmanifold  $G/\Gamma$ can be calculated using invariant
differential forms and is isomorphic to the  Chevalley-Eilenberg
cohomology  of the Lie algebra $\frak g$ of $G$. Moreover,  a
compact nilmanifold  is formal  in the sense of Sullivan's
rational homotopy theory \cite{Sullivan} if and only if it is a
torus. Therefore the compact examples  with full holonomy
$G_{2(2)}^*$ that we get  are not formal  in the sense of
Sullivan's rational homotopy theory \cite{Sullivan}. It is still
an open problem to see  if  compact Riemannian manifolds with
holonomy $G_2$ are formal.

Other examples of torsion-free $G^*_{2(2)}$-structures with
$1$-dimensional and $2$-dimensional holonomy have been found by M.
Freibert on almost Abelian Lie algebras \cite{Freibert}, showing
that there are  examples of calibrated Ricci-flat
$G^*_{2(2)}$-structures on  compact nilmanifolds which are not
parallel and do not have holonomy contained in $G^*_{2(2)}$. In
addition, it is worth mentioning the recent work \cite{Willse}
where manifolds with $5$-dimensional holonomy contained in
$G^*_{2(2)}$ are constructed.

\smallskip

{\it Acknowledgements.} We would like to thank Marco Freibert for
pointing out a mistake in the previous version of the paper and
for useful comments. In addition we would like to thank the
referee for valuable comments and corrections.

\section{Preliminaries}

For more details on the group $G_{2(2)}^*$ and
$G_{2(2)}^*$-structures as for the proofs not appearing in this
section see \cite{Kath}.

\smallskip

Let $M$ be a $7$-dimensional manifold, and $L(M)$ its bundle of
linear frames.

\begin{definition}
A $G^*_{2(2)}$-structure on $M$ is a $G^*_{2(2)}$-reduction of
$L(M)$.
\end{definition}

A $G^*_{2(2)}$-structure   $P\to M$   on $M$ determines a global
$3$-form $\varphi$ defined by the equivariant map
$$
\varphi: P  \rarrow  \bigwedge^3\bb{R}^7, \quad
              u  \mapsto  \varphi_0,
$$
with
$$\varphi_0=-e^{127}-e^{135}+e^{146}+e^{236}+e^{245}-e^{347}+e^{567},$$
where  $e^{ijk}$ stands for $e^i\wedge e^j\wedge e^k$. Conversely,
 a $3$-form $\varphi$,  $i.e.$ a section of
$\bigwedge^3_*(M)=L(M)\times_{GL(7)}\bigwedge^3_*(\R^7)$, where
$\bigwedge^3_*(\R^7)$ is the $GL(7)$-orbit of $\varphi_0$,
determines a $G_{2(2)}^*$-structure
$$P=\{u\in L(M) \mid u^*\varphi_0=\varphi\}.$$
The inclusion $G_{2(2)}^*\subset SO^+(4,3)$ induces  a
pseudo-Riemannian metric $g$ of index $4$ and a space and time
orientation on $M$. For the proof of the following proposition see
for instance \cite{Bryant} and \cite{Gray}.

\begin{proposition}
Let $P$ be a $G_{2(2)}^*$-structure on $M$, and $\nabla$ the
Levi-Civita connection of the associated  metric $g$.  The
following  conditions   are equivalent:
\begin{enumerate}
\item[(a)]  $\nabla \varphi=0$;

\item[(b)] $d\varphi=0$ and $d(*\varphi)=0$, where $*$ is the
Hodge star operator of $g$;

\item[(c)] The holonomy group of $g$ is isomorphic to a subgroup
of $G_{2(2)}^*$.


\end{enumerate}
\end{proposition}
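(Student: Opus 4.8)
The plan is to prove the equivalences by establishing (a) $\Leftrightarrow$ (c) directly via the holonomy principle, and then closing the loop with (a) $\Rightarrow$ (b) $\Rightarrow$ (a). For the equivalence of (a) and (c) I would invoke the fundamental principle of holonomy (Ambrose--Singer): since $G_{2(2)}^*$ is by definition the stabilizer in $GL(7,\R)$, and hence in $SO^+(4,3)$, of the model form $\varphi_0$, the reduction $P$ is precisely the set of frames in which $\varphi$ takes the normal form $\varphi_0$. The subbundle $P$ is preserved by parallel transport of the Levi-Civita connection if and only if $\varphi$ is a parallel section, that is $\nabla\varphi=0$; and $P$ being parallel is in turn equivalent to the holonomy group $\mathrm{Hol}(g)$ being contained in $G_{2(2)}^*$. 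This yields (a) $\Leftrightarrow$ (c) with no computation beyond the identification of $G_{2(2)}^*$ as the stabilizer of $\varphi_0$.

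The implication (a) $\Rightarrow$ (b) is immediate. Since $\nabla$ is torsion-free, the exterior derivative of any form is obtained by skew-symmetrizing its covariant derivative, so $\nabla\varphi=0$ forces $d\varphi=0$. Moreover the Hodge star operator is built from the metric and the (space- and time-)orientation, both of which are $\nabla$-parallel; hence $\nabla$ commutes with $*$ and $\nabla(*\varphi)=*(\nabla\varphi)=0$, which gives $d(*\varphi)=0$.

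The substantive direction is (b) $\Rightarrow$ (a), and this is where the main obstacle lies. Here I would analyze the intrinsic torsion of the $G_{2(2)}^*$-structure. Writing $\mathfrak{so}(4,3)=\mathfrak{g}_{2(2)}^*\oplus\mathfrak{m}$, where $\mathfrak{m}$ is isomorphic to the standard $7$-dimensional representation, the intrinsic torsion of $P$ is a section of a bundle associated to $\R^7\otimes\mathfrak{m}$, and $\nabla\varphi$ is a pointwise linear function of it. The key step is to decompose $\R^7\otimes\mathfrak{m}$ into irreducible $G_{2(2)}^*$-modules and, in parallel, to decompose the spaces $\bigwedge^4(\R^7)^*$ and $\bigwedge^5(\R^7)^*$ in which $d\varphi$ and $d(*\varphi)$ take values; one then checks that the $G_{2(2)}^*$-equivariant map sending the intrinsic torsion to the pair $(d\varphi,d(*\varphi))$ is injective, so that the vanishing of both torsion forms forces the intrinsic torsion, and hence $\nabla\varphi$, to vanish. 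The delicate point is the non-compactness of $G_{2(2)}^*$: the representation-theoretic decomposition of these modules and the injectivity of the torsion map must be verified in the split, pseudo-Riemannian setting rather than transported verbatim from the compact $G_2$ case. For these facts I would appeal to the detailed treatment of $G_{2(2)}^*$-structures and their intrinsic torsion in \cite{Kath}.
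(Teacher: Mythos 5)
The paper offers no proof of this proposition at all: it is quoted as a standard fact, with the proof deferred to \cite{Bryant} and \cite{Gray} (and \cite{Kath} for general facts about $G^*_{2(2)}$-structures), so there is nothing of the authors' to compare your argument against except the statement itself. Two of your three pieces are the standard and correct route. The implication (a) $\Rightarrow$ (b) is exactly as you say. For (b) $\Rightarrow$ (a), identifying $\nabla\varphi$ with the intrinsic torsion in $(\R^7)^*\otimes\f{m}$, where $\f{so}(4,3)=\f{g}_{2(2)}^*\oplus\f{m}$ with $\f{m}\cong\R^{4,3}$, and checking that the equivariant map $\tau\mapsto(d\varphi,d(*\varphi))$ is injective is precisely the Fern\'andez--Gray-type argument; your worry about non-compactness is legitimate but easily dispatched, either by citing \cite{Kath} as you do, or by observing that injectivity of a real linear map can be tested after complexification, where both the compact and split cases become the single $G_2^{\C}$ computation.

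The genuine gap is in your (a) $\Leftrightarrow$ (c). Only the direction ``$P$ parallel $\Rightarrow \mathrm{Hol}(g)\subset G^*_{2(2)}$'' follows from what you wrote. For the converse you assert that $\mathrm{Hol}(g)\subset G^*_{2(2)}$ forces $P$ to be preserved by parallel transport, but the holonomy principle only produces \emph{some} parallel $3$-form of the correct algebraic type compatible with $g$, namely the one obtained by parallel translation from a frame $u$ with $\mathrm{Hol}_u(g)\subset G^*_{2(2)}$; that frame need not lie in $P$, so the resulting parallel reduction need not be $P$. Concretely, on flat $\R^{4,3}$ the $3$-forms compatible with the flat metric and orientations form, at each point, the $7$-dimensional homogeneous space $SO^+(4,3)/G^*_{2(2)}$, so there exist non-constant (hence non-parallel) fields $\varphi$ of split-generic $3$-forms inducing the flat metric; these satisfy (c) (the holonomy is trivial) but not (a). This is why careful formulations of the Riemannian analogue (e.g.\ Joyce's) state (c) as ``$\mathrm{Hol}(g)$ is contained in $G_2$ \emph{and} $\varphi$ is the $3$-form induced by the holonomy reduction.'' Your proof should either incorporate that clause, or read (c) as the assertion that a holonomy bundle of $g$ is contained in $P$ itself, and say so explicitly; as written, the step is false. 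A minor point: what you invoke is the holonomy principle (parallel tensors correspond to holonomy-invariant tensors), not the Ambrose--Singer theorem, which instead describes the holonomy algebra in terms of curvature and is what the paper uses later to bound the holonomy from below.
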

The $G_{2(2)}^*$-structure $P$ is  called {\em torsion-free} if
one of the previous equivalent conditions hold.

Let $M$ be a manifold  endowed with a $G_{2(2)}^*$-structure
determined by a $G_{2(2)}^*$-reduction $P$ of $L(M)$ and
associated $3$-form $\varphi$. Suppose that there is a Lie group
$G$ acting transitively on $M$. We say that the
$G_{2(2)}^*$-structure is $G$-invariant if $P$ is invariant by the
lifted action of $G$ on $L(M)$. By the definition of $\varphi$,
this is equivalent to $\varphi$ being $G$-invariant. In addition,
the inclusion of $P$ in the frame of orthonormal frames with
respect to the associated metric $g_{\varphi}$ implies that $G$
acts on $M$ by isometries with respect to this metric.

\subsection{Almost special $\varepsilon$-Hermitian structures in six
dimensions}\label{special hermitian section}

We recall that a $k$-form on a  $n$-dimensional real vector space
$V$   is called {\em stable} if its $GL(V)$ orbit is an open
subset of $\bigwedge^k V^*$. For a collection of basic facts about
stable forms and the proofs of the forthcoming  results  in this
section see \cite{CCLS}.

Regarding $2$-forms, we  recall that a $2$-form $\omega$ on a real
vector space $V$ of dimension $n=2m$ is stable if and only if it
is non-degenerate, that is, $\omega^m\neq 0$. Let now $V$ be an
oriented $6$-dimensional vector space. We consider the canonical
isomorphism
$$
k:  \bigwedge^5 V^*  \rarrow  V\otimes \bigwedge^6 V^*, \quad
     \xi         \mapsto   X\otimes\nu
     $$
     with $i_X\nu=\xi.$

Let $\rho$ be a $3$-form on $V$, we define (see \cite{CCLS})
\begin{equation}\label{K sub rho stable}
K_{\rho}(v)=k((i_v\rho)\wedge \rho)\in V\otimes \bigwedge^6V^*
\end{equation}
\begin{equation}\label{lambda stable}
\lambda(\rho)=\frac{1}{6}\mathrm{tr}(K_{\rho}^2)\in
\left(\bigwedge^6 V^* \right)^{\otimes 2}
\end{equation}
\begin{equation}\label{phi stable}
\phi(\rho)=\sqrt{|\lambda(\rho)|}\in\bigwedge^6 V^*
\end{equation}
where we have chosen the positive square root with respect to the
orientation of $V$. In the case $\lambda(\rho)\neq 0$ we also
define
\begin{equation}\label{J rho stable}
J_{\rho}=\frac{1}{\phi(\rho)}K_{\rho}\in \mathrm{End}(V).
\end{equation}

\begin{proposition}[\cite{CCLS}]
A $3$-form $\rho$ on $V$ is stable if and only if
$\lambda(\rho)\neq 0$. In that case there are two orbits
corresponding to $\lambda(\rho)>0$ and $\lambda(\rho)<0$.
\end{proposition}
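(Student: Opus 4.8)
The plan is to prove that a 3-form $\rho$ on a 6-dimensional real vector space $V$ is stable exactly when $\lambda(\rho)\neq 0$, and that the two signs of $\lambda(\rho)$ correspond to the two open orbits. The natural strategy is to exhibit explicit normal forms. I would first observe that $\dim\bigwedge^3 V^* = 20$ while $\dim GL(V) = 36$, so an open orbit is at least plausible by dimension count; the subtlety is that the generic stabilizer must have dimension exactly $36-20=16$. I would then classify 3-forms up to the $GL(V)$-action.

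The key computational step is to understand the map $K_\rho$ defined in (\ref{K sub rho stable}). Since $K_\rho$ is built equivariantly from $\rho$, it transforms by conjugation under $GL(V)$, so $\lambda(\rho)=\tfrac16\mathrm{tr}(K_\rho^2)$ is a relative $GL(V)$-invariant landing in $(\bigwedge^6 V^*)^{\otimes 2}$; it vanishes or not independently of the chosen frame, which is what makes the condition $\lambda(\rho)\neq 0$ $GL(V)$-invariant and hence well suited to detecting orbits. I would compute $K_\rho$ on candidate normal forms. Writing $V^*$ with basis $e^1,\dots,e^6$, the two expected stable models are $\rho_+ = e^{123}+e^{456}$ (giving $J_{\rho}^2 = +\mathrm{id}$, an almost para-complex/product structure, $\lambda>0$) and $\rho_- = e^{135}-e^{146}-e^{236}-e^{245}$ (giving $J_\rho^2=-\mathrm{id}$, an almost complex structure, $\lambda<0$). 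For each I would check directly that $\lambda(\rho)\neq 0$ and compute the sign, confirming that $J_\rho$ is a product structure when $\lambda>0$ and a complex structure when $\lambda<0$.

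For the converse and the orbit statement I would argue that $\lambda(\rho)\neq 0$ forces $\rho$ into one of these two normal forms. When $\lambda(\rho)<0$, the endomorphism $J_\rho$ satisfies $J_\rho^2=-\mathrm{id}$, so it equips $V$ with a complex structure; $\rho$ together with $J_\rho^*\rho$ assembles into a nonzero complex $(3,0)$-form $\Psi=\rho+iJ_\rho^*\rho$, and since $GL(3,\C)$ acts transitively on nonzero decomposable $(3,0)$-forms on a 3-dimensional complex space, $\rho$ lies in a single orbit. When $\lambda(\rho)>0$, $J_\rho$ is a product structure splitting $V=V_+\oplus V_-$ into two 3-dimensional eigenspaces, and $\rho$ restricts to a volume form on each summand, again yielding a single orbit under the stabilizer $GL(3,\R)\times GL(3,\R)$ (up to swap). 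Each orbit is open because its stabilizer has the correct dimension $16$, matching $36-20$. I expect the main obstacle to be the algebraic verification that $\lambda(\rho)\neq 0$ actually implies $J_\rho^2=\pm\mathrm{id}$ in general rather than merely on the normal forms — that is, showing $K_\rho^2$ is a scalar multiple of the identity — which is the identity underlying the whole construction and the step most prone to sign and normalization errors; I would handle it by exploiting the equivariance to reduce to the two normal forms, where the claim can be checked by a finite explicit computation, and then invoking Zariski-density of the stable locus to conclude in general.
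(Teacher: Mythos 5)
You should first note a point of reference: the paper does not prove this proposition at all --- it is quoted verbatim from \cite{CCLS} (the result goes back to Hitchin), so your attempt can only be measured against the standard normal-form proof in that literature, which is indeed the route you sketch. Your logical skeleton is the right one, but as written it risks circularity: ``exploiting the equivariance to reduce to the two normal forms'' presupposes the very classification being proved. The non-circular ordering is: (i) compute the infinitesimal stabilizers of $\rho_+=e^{123}+e^{456}$ and $\rho_-=e^{135}-e^{146}-e^{236}-e^{245}$ and find dimension $16$, so both orbits are open by invariance of domain; (ii) check $K_{\rho_\pm}^2=\lambda(\rho_\pm)\,\mathrm{id}$ and $\lambda(\rho_+)>0>\lambda(\rho_-)$ at those two points; (iii) observe that $K_\rho^2-\lambda(\rho)\,\mathrm{id}$ is polynomial in $\rho$ and, by equivariance, vanishes on the two open orbits, hence vanishes identically (a polynomial vanishing on a nonempty Euclidean-open set is zero --- ``Zariski-density of the stable locus'' is the wrong phrase, since at that stage you do not yet know what the stable locus is); (iv) only then classify forms with $\lambda\neq 0$. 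You should also state the easy direction explicitly: $\lambda$ scales by $(\det g)^{-2}$ along orbits, so it vanishes identically on the orbit of any zero of $\lambda$, and a not-identically-zero polynomial cannot vanish on an open set; hence stable implies $\lambda\neq 0$.

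The genuine gap is in step (iv). From $J_\rho^2=-\mathrm{id}$ alone it does \emph{not} follow that $\Psi=\rho+iJ_\rho^*\rho$ is of type $(3,0)$, and from $J_\rho^2=+\mathrm{id}$ alone it does not follow that the eigenspaces $V_\pm$ are $3$-dimensional, nor that $\rho$ splits as a sum of volume forms on them --- a priori $\rho$ could have mixed components in $\bigwedge^2V_+^*\otimes V_-^*\oplus V_+^*\otimes\bigwedge^2V_-^*$. Two further identities are needed. First, $\mathrm{tr}\,K_\rho=0$, which forces the $3+3$ eigenvalue split; this has a one-line direct proof, since $\mathrm{tr}(K_\rho)\,\nu=\sum_i e^i\wedge (i_{e_i}\rho)\wedge\rho=3\,\rho\wedge\rho=0$. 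Second, the slot-symmetry identity $\rho(K_\rho X,Y,Z)=\rho(X,K_\rho Y,Z)$ for all $X,Y,Z$: in the para-complex case, taking $X\in V_+$, $Y\in V_-$ it gives $\rho(X,Y,Z)=-\rho(X,Y,Z)$, killing the mixed components; in the complex case it says exactly that $\rho$ has no $(2,1)+(1,2)$ part, i.e.\ that $\Psi$ is of type $(3,0)$. This identity is again polynomial in $\rho$ and holds at both normal forms, so the same density mechanism you invoke for $K_\rho^2=\lambda(\rho)\,\mathrm{id}$ establishes it in general; but it must be stated and verified, because it --- not $J_\rho^2=\pm\mathrm{id}$ --- is what actually carries an arbitrary $\rho$ with $\lambda(\rho)\neq 0$ onto one of the two normal forms. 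With these two additions, and the reordering above, your outline closes up into a complete proof.
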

The elements in the orbit corresponding to
$\lambda(\rho)>0$ have stabilizer $SL(3,\R)\times SL(3,\R)$ in
$GL^+(V)$ and $J_{\rho}$ is an almost para-complex structure,
i.e., $J_{\rho}^2=1$, $J_{\rho}\neq 1$. The elements in the orbit
corresponding to $\lambda(\rho)<0$ have stabilizer $SL(3,\bb{C})$
in $GL^+(V)$ and $J_{\rho}$ is an almost complex structure, i.e,
$J_{\rho}^2=-1$.  In both cases the dual form of $\rho$ is
determined by the formula
$$\hat{\rho}=J_{\rho}^*\rho.$$
Finally, it is easy to prove that $\hat{\hat{\rho}}=-\rho$ and
\begin{equation}\label{estructuras complejas}
J_{\hat{\rho}}=-\epsilon J_{\rho}.
\end{equation}

A pair $(\omega,\rho)\in\bigwedge^2V^*\times\bigwedge^3V^*$ of
stable forms is called \textit{compatible} if $\omega\wedge\rho=0$
(or equivalently $\hat{\rho}\wedge\omega=0$) and
\textit{normalized} if $\hat{\rho}\wedge\rho=\frac{2}{3}\omega^3$.
Let $\varepsilon$ be the sign of $\lambda(\rho)$, every compatible
pair $(\omega,\rho)$ uniquely determines an $\varepsilon$-complex
structure $J_{\rho}$ (i.e., $J_{\rho}^2=\varepsilon$), an inner
product $g_{(\omega,\rho)}=\varepsilon\omega(\cdot,J_{\rho}\cdot)$
(of signature $(3,3)$ for $\varepsilon=1$, and
definite  or of signature $(2,4)$ or $(4,2)$ for
$\varepsilon=-1$), and a complex volume form
$\Psi=\rho+i_{\varepsilon}\hat{\rho}$ of type $(3,0)$ with respect
to $J_{\rho}$ (where $i_{\varepsilon}$ is the complex or
para-complex imaginary unit accordingly). In addition, the
stabilizer of the pair $(\omega,\rho)$ under $GL(V)$ is $SU(p,q)$
for $\varepsilon=-1$ and $SL(3,\R)\subset SO(3,3)$ for
$\varepsilon=1$.

\section{Torsion-free $G_{2(2)}^*$-structures on  nilpotent Lie
algebras}\label{section torsion free G2(2)-structures on nilpotent
Lie algebras}

Let $\f{g}$ be a $7$-dimensional nilpotent Lie algebra with a
three form  $\varphi \in \Lambda^3 \f{g}^*$ defining a
$G^*_{2(2)}$-structure. Let $\xi$ be an element in the center of
$\f{g}$ such that $g_{\varphi}(\xi,\xi)\neq 0$, where
$g_{\varphi}$ is the inner product of signature $(4,3)$ induced by
$\varphi$.  The quotient $\f{h}=\f{g}/\mathrm{span}\{\xi\}$  has a
unique Lie algebra structure making $\f{h}$ nilpotent and the
projection map $\f{g}\to\f{h}$ is  a Lie algebra epimorphism. Via
the pullback we identify basic forms on $\f{g}$ (i.e., forms
$\alpha$ with $i_{\xi}\alpha=0$) with forms on $\f{h}$. Assume
that $g_{\varphi}(\xi,\xi)=-\varepsilon\in\{\pm 1\}$. Let
$\eta=-\varepsilon\xi^{\flat}$ so that $\eta(\xi)=1$, in analogy
with circle bundles we can think of $\eta$ as a connection form on
the bundle $\f{g}\to\f{h}$, and $d\eta$ as its curvature form. We
write
\begin{equation}\label{varphi}\varphi=\omega\wedge\eta+\psi_+,\end{equation} where
$\omega=i_{\xi}\varphi$ and $\psi_+$ are basic. A simple
computation shows that
\begin{equation}\label{starvarphi}*\varphi=\varepsilon\psi_-\wedge\eta-\frac{1}{2}\varepsilon\omega\wedge\omega,\end{equation}
where $\psi_-=\varepsilon i_{\xi}(*\varphi)=\hat{\psi}_+$ is
basic. Since $g_{\varphi}(\xi,\xi)=\pm 1$, by the stability of
$\varphi$, there is always a   basis $\{E_1,\ldots,E_7\}$ of
$\frak g$ such that
$$\varphi=\varepsilon (E^{127}+E^{347})+E^{567}+E^{135}+\varepsilon(E^{146}+E^{236}+E^{245})$$
and $\xi=E_7$. Such basis is orthonormal, so that $\eta=E^7$ and
we obtain $$\omega=\varepsilon (E^{12}+E^{34})+E^{56}, \quad
\psi_+=E^{135}+\varepsilon(E^{146}+E^{236}+E^{245}).$$ A simple
computation using the Hodge dual of $\varphi$ shows that with
respect to this basis $\psi_-=
E^{246}+\varepsilon(E^{235}+E^{145}+E^{136})$. Therefore it is
easy to see that $\omega$ and $\psi_+$ are stable and compatible,
hence $(\omega,\psi_+)$ defines a special $\varepsilon$-Hermitian
structure on $\f{h}$. Moreover, the pair $(\omega,\psi_+)$ is
normalized as $\varepsilon\in\{\pm 1\}$. The same is true for
$(\omega,\psi_-)$ (see Proposition 1.14 in \cite{CCLS}).

We now suppose that $\varphi$ is torsion free, i.e., $d\varphi=0$,
$d(*\varphi)=0$. Since $\xi$ is in the center of $\f{g}$ one has
that $\mathcal{L}_{\xi}\varphi=0$ and
$\mathcal{L}_{\xi}(*\varphi)=0$, so that
$$d\omega=d(i_{\xi}\varphi)=\mathcal{L}_{\xi}\varphi=0,$$
and
$$d\psi_-=\varepsilon d(i_{\xi}(*\varphi))=\varepsilon \mathcal{L}_{\xi}(*\varphi)=0.$$
This means that the pair $(\omega,\psi_-)$ is a \textit{symplectic
half-flat structure} on $\f{h}$. Finally taking exterior
derivative in \eqref{varphi} and \eqref{starvarphi} we obtain
$$0=d\varphi=\omega\wedge d\eta+d\psi_+,\qquad 0=d(*\varphi)=-\varepsilon\psi_-\wedge d\eta.$$
Therefore, constructing torsion-free $G^*_{2(2)}$-structures on
$\f{g}$ with $g_{\varphi}(\xi,\xi)=-\varepsilon\in\{\pm 1\}$ is
equivalent to construct symplectic half-flat structures on $\f{h}$
satisfying
\begin{equation}\label{additional equations}\omega\wedge d\eta+d\psi_+=0,\qquad \psi_-\wedge
d\eta=0.\end{equation}

\begin{remark}
With the previous procedure we can obtain all the torsion-free
$G_{2(2)}^*$-structu\-res for which $\xi$ is unitary. However, let
$(\omega,\psi_+)$ be normalized and let $t>0$. The $3$-form
$\varphi_t=t^{\frac{1}{2}}\omega\wedge\eta+\psi_+$ is a
$G_{2(2)}^*$-structure with associated inner product $g_t$
satisfying $g_t(\xi,\xi)=-\varepsilon t$.
\end{remark}


We will restrict ourselves to indecomposable $7$-dimensional
nilpotent Lie algebras. In order to apply the reduction procedure
it is reasonable first  to consider $G^*_{2(2)}$-structures for
which the center is definite with respect to $g_{\varphi}$.
Although as we shall see this condition is rather strong, it will
lead us to examples of metrics with full holonomy $G_{2(2)}^*$.

Following \cite{CF} we have the following obstructions to the
existence of a parallel $G^*_{2(2)}$-structure on $\f{g}$.

\begin{proposition}\label{obstruction 1}
Let $\f{g}$ be a $7$-dimensional Lie algebra admitting a
$G^*_{2(2)}$-structure with associated $3$-form $\varphi$. Then
$(i_X\varphi)^3\neq 0$ for every element $X\in \f{g}$ with
$g_{\varphi}(X,X)\neq 0$.
\end{proposition}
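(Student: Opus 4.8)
The plan is to reduce the whole statement to a pointwise linear-algebra computation in the adapted orthonormal basis, since nothing here actually uses the Lie bracket. First I would observe that the claim is homogeneous of degree three in $X$: replacing $X$ by $tX$ gives $(i_{tX}\varphi)^3 = t^3(i_X\varphi)^3$, so whether this vanishes depends only on the line through $X$. Hence there is no loss of generality in rescaling so that $g_\varphi(X,X)=\pm 1$; I write $g_\varphi(X,X)=-\varepsilon$ with $\varepsilon\in\{\pm1\}$, exactly as in the reduction above.

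Next I would invoke the adapted basis already exhibited earlier: because $\varphi$ is stable and $X$ has unit norm, there is an orthonormal basis $\{E_1,\dots,E_7\}$ of $\f{g}$ with $X=E_7$ and
$$\varphi=\varepsilon (E^{127}+E^{347})+E^{567}+E^{135}+\varepsilon(E^{146}+E^{236}+E^{245}).$$
Contracting with $X=E_7$ then gives $i_X\varphi=\omega=\varepsilon(E^{12}+E^{34})+E^{56}$, the same stable $2$-form that appeared in the splitting \eqref{varphi}. The computation of the cube is immediate: the three summands $\varepsilon E^{12}$, $\varepsilon E^{34}$ and $E^{56}$ involve pairwise disjoint coordinates, so each squares to zero and every monomial of $\omega^3$ containing a repeated summand vanishes. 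The only surviving term is the one taking a single factor from each summand, so $\omega^3=3!\,\varepsilon^2\,E^{12}\wedge E^{34}\wedge E^{56}=6\,E^{123456}\neq 0$, using $\varepsilon^2=1$. This is precisely the assertion $(i_X\varphi)^3\neq0$; geometrically it says that $i_X\varphi$ restricts to a nondegenerate (hence symplectic) $2$-form on the $6$-dimensional orthogonal complement $X^\perp$.

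The one point that genuinely deserves care—and the step I would flag as the main obstacle—is the applicability of the normal form to an \emph{arbitrary} $X$ of nonzero norm, rather than to a central $\xi$ as in the torsion-free reduction above. Centrality was used there only to kill the Lie derivative terms, not to produce the basis; the existence of an adapted orthonormal frame is a purely algebraic consequence of the stability of $\varphi$, equivalently of $G_{2(2)}^*\subset SO^+(4,3)$ acting transitively on unit spacelike vectors and on unit timelike vectors separately. Granting that homogeneity, the inequality follows pointwise with no bracket information whatsoever, and the proof is complete.
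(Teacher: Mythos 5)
Your proof is correct and takes essentially the same route as the paper's: reduce to $g_{\varphi}(X,X)=-\varepsilon\in\{\pm 1\}$, pass to the adapted basis guaranteed by stability with $X=E_7$, and observe that $i_X\varphi=\varepsilon(E^{12}+E^{34})+E^{56}$ has nonzero cube. The details you add---the homogeneity argument justifying the rescaling, the explicit computation $(i_X\varphi)^3=6\,E^{123456}\neq 0$, and the remark that the adapted frame exists for \emph{any} non-null vector as a purely algebraic consequence of the orbit structure of $G^*_{2(2)}$ on non-null vectors, independent of centrality---are exactly the points the paper leaves implicit.
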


\begin{proof}
Let $X$ be an element of $\f{g}$ with $g_{\varphi}(X,X)\neq 0$,
then without loss of generality we can assume that
$g_{\varphi}(X,X)=-\varepsilon\in\{\pm 1\}$. Let
$\eta=-\varepsilon X^{\flat}$, we write
$\varphi=\omega\wedge\eta+\psi_+$. As seen before there is a basis
$\{E_1,\ldots,E_7\}$ of $\f{g}$ such that
$$\varphi=\varepsilon (E^{127}+E^{347})+E^{567}+E^{135}+\varepsilon(E^{146}+E^{236}+E^{245}),$$
with $E_7=X$. Therefore
$$i_{X}\varphi=\varepsilon (E^{12}+E^{34})+E^{56},$$
so that $(i_X\varphi)^3\neq 0$.
\end{proof}

Making use of Proposition \ref{obstruction 1}, we can see that a
Lie algebra $\f{g}$ does not admit a calibrated
$G^*_{2(2)}$-structure, i.e. with  associated $3$-form $\varphi$
satisfying $d\varphi=0$, with definite center by finding an
element $\xi$ in the center of $\f{g}$ such that $\left(i_
\xi\varphi\right)^3=0$ for every closed $3$-form
$\varphi\in\bigwedge^3\f{g}^*$. Note that this is an obstruction
to the existence of a $G^*_{2(2)}$-structure with definite center
in general, without any assumption about $d\varphi$ or
$d(*\varphi)$.

\begin{proposition}\label{obstruction 2}
Let $\f{g}$ be a  $7$-dimensional nilpotent Lie algebra with a
calibrated $G^*_{2(2)}$-structure $\varphi$ with definite center.
If $\pi:\f{g}\to \f{h}$ is a Lie algebra epimorphism with kernel
contained in the center and $\f{h}$ is $6$-dimensional, then
$\f{h}$ admits a symplectic form $\omega$ and the curvature form
is in the kernel of
$$\cdot\wedge\omega: H^2(\f{h}^*)\to H^4(\f{h}^*).$$
Moreover, if the curvature form is exact, then $\f{g}$ is
isomorphic to $\R\oplus\f{h}$ as Lie algebras.
\end{proposition}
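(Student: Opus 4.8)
The plan is to use the reduction framework just set up: with a definite center, pick $\xi$ in the center, assume $g_\varphi(\xi,\xi)=-\varepsilon$ with $\varepsilon\in\{\pm1\}$, and work with the decomposition $\varphi=\omega\wedge\eta+\psi_+$ from \eqref{varphi}, where $\pi:\f g\to\f h$ is the given epimorphism whose kernel is $\mathrm{span}\{\xi\}$. The calibration hypothesis $d\varphi=0$ expands via $0=d\varphi=\omega\wedge d\eta+d\psi_+$, exactly as in \eqref{additional equations} (we only have $d\varphi=0$, so I only use the first equation and do not assume $d\psi_-=0$). So first I would record that the induced form $\omega=i_\xi\varphi$ on $\f h$ is closed: since $\xi$ is central, $d\omega=d(i_\xi\varphi)=\mathcal L_\xi\varphi=0$, as computed in the text. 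The fact that $\omega$ is stable (nondegenerate, $\omega^3\neq0$) is immediate from Proposition \ref{obstruction 1} applied to $X=\xi$, which gives $(i_\xi\varphi)^3=\omega^3\neq0$. Hence $\omega$ is a symplectic form on $\f h$.

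Next I would establish the cohomological condition on the curvature. The curvature form is $d\eta$, which is basic and closed (being exact, $d(d\eta)=0$, and basic since $\mathcal L_\xi\eta=0$), so it defines a class $[d\eta]\in H^2(\f h^*)$ under the Nomizu/Chevalley--Eilenberg identification. From $d\psi_+=-\omega\wedge d\eta$ I read off that $\omega\wedge d\eta$ is exact on $\f g$, but I need exactness of the wedge as a statement about basic forms on $\f h$. The key point is that both $\omega\wedge d\eta$ and $\psi_+$ are basic, so the relation $\omega\wedge d\eta=-d\psi_+$ holds at the level of forms on $\f h$ (the pullback identification of basic forms with forms on $\f h$ intertwines the differentials, because $d$ on $\f g$ restricted to basic forms agrees with $d$ on $\f h$). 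Therefore the cohomology class $[\omega\wedge d\eta]=[\omega]\wedge[d\eta]\in H^4(\f h^*)$ vanishes, which says precisely that $[d\eta]$ lies in the kernel of the map $\cdot\wedge\omega:H^2(\f h^*)\to H^4(\f h^*)$.

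Finally I would prove the rigidity statement: if the curvature $d\eta$ is exact on $\f h$, then $\f g\cong\R\oplus\f h$. The idea is that exactness of $d\eta$ as a basic $2$-form means $d\eta=d\beta$ for some basic $1$-form $\beta$ (i.e.\ $\beta$ pulled back from $\f h$). Then $\eta-\beta$ satisfies $d(\eta-\beta)=0$, so replacing $\eta$ by the closed $1$-form $\tilde\eta=\eta-\beta$ gives a new splitting: since $\tilde\eta(\xi)=\eta(\xi)=1$ and $d\tilde\eta=0$, the element $\xi$ spans a closed ideal and $\tilde\eta$ trivializes the extension, exhibiting $\f g$ as the direct sum of $\ker\tilde\eta$ (which projects isomorphically onto $\f h$ as a Lie algebra) and $\mathrm{span}\{\xi\}\cong\R$. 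Concretely, closedness of $\tilde\eta$ means $\xi$ is not only central but the bracket never produces a $\xi$-component after the change of connection, so the Lie algebra structure splits as a direct product.

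The main obstacle I anticipate is the last step, the rigidity argument: one must check carefully that the central extension $0\to\mathrm{span}\{\xi\}\to\f g\to\f h\to0$ is classified by the class $[d\eta]\in H^2(\f h^*)$ (dually, by the Chevalley--Eilenberg $2$-cocycle encoding the $\xi$-components of the brackets), and that exactness of this class is exactly the condition for the extension to be trivial as a product $\R\oplus\f h$ rather than merely abelian. The translation between ``$d\eta$ exact'' and ``the defining $2$-cocycle is a coboundary'' is the crux, and it is where one must be careful that changing the connection $1$-form $\eta$ by an exact basic form genuinely splits the Lie bracket and does not merely alter the metric.
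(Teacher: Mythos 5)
Your proposal is correct and follows essentially the same route as the paper's proof: reduce along a central $\xi$, split $d\varphi=0$ into its $\eta$-component and basic component to get closedness of $\omega$ and exactness of $\omega\wedge d\eta$ on $\f{h}$, use stability (Proposition \ref{obstruction 1}) for non-degeneracy, and trivialize the central extension when $d\eta$ is exact. The only difference is one of detail: you spell out the final splitting argument (replacing $\eta$ by the closed form $\tilde\eta=\eta-\beta$ so that $\ker\tilde\eta$ is an ideal complementing $\mathrm{span}\{\xi\}$), which the paper compresses into the single assertion that the epimorphism is then trivial.
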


\begin{proof}
Let $\xi$ be in the kernel of $\pi$, as $\xi$ is in the center of
$\f{g}$ we have that $g_{\varphi}(\xi,\xi)\neq 0$, and we can
suppose that $g_{\varphi}(\xi,\xi)=-\varepsilon\in\{\pm 1\}$. Let
$\eta=-\varepsilon\xi^{\flat}$, we write $\varphi=
\pi^*\omega\wedge\eta+ \pi^*\psi_+$, where $\omega$ and $\psi_+$
are forms on $\f{h}$. Since $\pi^*\omega=i_{\xi}\varphi$ we have
that $\omega$ is non-degenerate. Moreover,
$$0=d\varphi=d\pi^*\omega\wedge\eta+\pi^*\omega\wedge d\eta+d\pi^*\psi_+,$$
where $d\pi^*\omega$, $d\eta$ and $d\pi^*\psi_+$ are basic.
Therefore $\omega$ is a symplectic form and $d\eta$ is in the
kernel of $\cdot\wedge\omega: H^2(\f{h}^*)\to H^4(\f{h}^*)$. If
the form $d\eta$ is exact on $\frak h$ then the epimorphism
$\f{g}\to\f{h}$ is trivial, hence $\f{g}$ is isomorphic to
$\R\oplus\f{h}$ as Lie algebras.
\end{proof}

Since a torsion-free $G^*_{2(2)}$-structure is in particular
calibrated, making use of Proposition \ref{obstruction 2} we can
show that a Lie algebra $\f{g}$ does not admit a torsion-free
$G^*_{2(2)}$-structure with definite center by finding an element
$\xi$ in the center of $\f{g}$ contradicting Proposition
\ref{obstruction 2}, i.e.,  such that
$\f{h}=\f{g}/\mathrm{Span}\{\xi\}$ does not admit a symplectic
form $\omega$, or such that $d\eta$ is not in the kernel of
$\cdot\wedge\omega: H^2(\f{h}^*)\to H^4(\f{h}^*)$.

The aim now is to take Gong's classification of indecomposable
$7$-dimen\-sional Lie algebras (see \cite{Gong}), and eliminate
those Lie algebras $\f{g}$ for which there is an element
$\xi\in\f{g}$ contradicting Proposition \ref{obstruction 1} or
Proposition \ref{obstruction 2}, because as we have seen those Lie
algebras cannot admit a torsion-free $G^*_{2(2)}$-structure with
definite center. In order to do that we use the work done in
\cite{CF}, where analogous obstructions are used to classify
$7$-dimensional nilpotent Lie algebras admitting a calibrated
$G_2$-structure. More precisely, in that paper the authors start
with Gong's list and eliminate those Lie algebras $\f{g}$ which do
not admit a calibrated $G_2$-structure by finding an element
$\xi\in\f{g}$  contradicting Proposition 1 or Lemma 3 therein,
which are the analogues of Proposition \ref{obstruction 1} and
Proposition \ref{obstruction 2} for the Riemannian case. This is
done in the proof of Lemma 5 and in the Appendix of \cite{CF}. In
our case, since the obstructions given by Proposition
\ref{obstruction 1} and Proposition \ref{obstruction 2} (as well
as the obstructions given by Proposition 1 or Lemma 3 of
\cite{CF}) only depend on the structure of the space of $3$-forms
and the space of closed $2$-forms (and not in the signature of the
metric), a simple inspection shows that for every Lie algebra in
Gong's list eliminated in \cite{CF},  the same element $\xi$ used
in \cite{CF} also contradicts Proposition \ref{obstruction 1} or
Proposition \ref{obstruction 2}. This means that any of the Lie
algebras eliminated in \cite{CF} can admit a torsion-free
$G^*_{2(2)}$-structure. The only indecomposable Lie algebras in
Gong's list not yielding a contradiction with Proposition
\ref{obstruction 1} and Proposition \ref{obstruction 2} are
\begin{equation}\label{list1}
\begin{array}{l}
(0,0,12,0,0,13+24,15),\\
(0,0,12,0,24+13,14,46+34+15+23), \,(0,0,12,0,0,13,14+25),\\
(0,0,12,0,13,24+23,25+34+16+15-3 \cdot {26}),\\
(0,0,0,12,23,-13,2 \cdot {26}-2 \cdot{34}-2 \cdot{16}+ 2 \cdot  {25}),\\
(0,0,0,12,13,14+23,15),  \, (0,0,0,12,13,14,15)\\
(0,0,12,13,23,15+24,16+34),\\
(0,0,12,13,23,15+24,16+25+34).
\end{array} \end{equation}
The notation $(0,0,0,12,23,-13,2 \cdot {26}-2 \cdot{34}-2
\cdot{16}+ 2 \cdot {25})$  means that $\f{g}^*$ has a basis
$\{e^1,\ldots,e^7\}$ such that $de^i=0$, for $i=1,\ldots,3$ and
 $de^4=e^{12}$,  $d e^5= e^{23}$, $d e^6 = - e^{13}$, $de^7 = 2 e^{26} - 2 e^{34} - 2 e^{16} + 2
 e^{25}$. We now show which of these Lie algebras actually admit a
 torsion-free $G^*_{2(2)}$-structure with definite center.

\begin{lemma}\label{lemma eliminating}
With the exception of $$\f{g}_1=(0,0,0,12,23,-13,2 \cdot {26}-2
\cdot{34}-2 \cdot{16}+ 2 \cdot {25}),$$ none of
the Lie algebras in \eqref{list1} admits a torsion-free
$G^*_{2(2)}$-structure with definite center.
\end{lemma}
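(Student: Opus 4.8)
The plan is to go through the eight algebras of \eqref{list1} other than $\f{g}_1$ one at a time and rule out a torsion-free $G^*_{2(2)}$-structure with definite center for each. The first step I would carry out is to read off the center of every algebra from its structure equations. Four of them, namely $(0,0,12,0,0,13+24,15)$, $(0,0,12,0,0,13,14+25)$, $(0,0,0,12,13,14+23,15)$ and $(0,0,0,12,13,14,15)$, have a two-dimensional center $\langle e_6,e_7\rangle$, whereas the remaining four have one-dimensional center $\langle e_7\rangle$. This dichotomy organizes the argument, since requiring the \emph{whole} center to be definite is a substantially stronger demand in the two-dimensional case: there one must control $g_\varphi$ on an entire plane of central directions, not merely on a single line.

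The second step is the reduction set up earlier in this section. For a definite central element $\xi$ I would fix $\eta$ with $\eta(\xi)=1$ and write $\varphi=\omega\wedge\eta+\psi_+$, so that by the discussion around \eqref{additional equations} the structure is torsion-free exactly when $(\omega,\psi_-)$, with $\psi_-=\hat{\psi}_+$, is a symplectic half-flat structure on $\f{h}=\f{g}/\langle\xi\rangle$ satisfying $\omega\wedge d\eta+d\psi_+=0$ and $\psi_-\wedge d\eta=0$. Here $[d\eta]\in H^2(\f{h}^*)$ is, up to a nonzero scalar, the extension class of $\f{g}$ over $\f{h}$ and is therefore independent of the metric. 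The essential point is that all nine algebras in \eqref{list1} survive Propositions \ref{obstruction 1} and \ref{obstruction 2}, so the calibrated obstructions alone cannot finish the job; the elimination must exploit the genuinely torsion-free part of the conditions, that is $d\psi_-=0$ together with $\psi_-\wedge d\eta=0$, which come from $d(*\varphi)=0$.

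Concretely, for each quotient $\f{h}$ I would write down the general closed $2$-form $\omega$ and the general stable $3$-form $\psi_+$, form its Hitchin dual $\psi_-=\hat{\psi}_+$, and then impose the full system: compatibility and normalization of $(\omega,\psi_+)$, the half-flat equations $d\omega=0$ and $d\psi_-=0$, and the two reconstruction equations of \eqref{additional equations}. Solving the linear closedness conditions cuts the parameter space down sharply, and on what remains I would compute the induced inner product $g_\varphi$ and inspect its restriction to the center. In the one-dimensional-center cases I expect either the reconstruction equation $\omega\wedge d\eta+d\psi_+=0$ to be inconsistent with the closedness of $\psi_-$, or the resulting $g_\varphi(\xi,\xi)$ to be forced to vanish. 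In the two-dimensional-center cases the decisive observation should be that the image of $e_6$ is a nonzero central vector of $\f{h}$ whose $g_\varphi$-norm is tied, through the compatibility of $(\omega,\psi_\pm)$, to that of $\xi$; enforcing the half-flat equations then produces either a null direction or a direction of the opposite sign inside $\langle e_6,e_7\rangle$, so that the center cannot be definite. Since here one is free to let $\xi$ range over all definite central directions, I would use this extra freedom to strengthen the constraint whenever a single choice of $\xi$ is not enough.

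The step I expect to be the main obstacle is precisely this control of the signature of $g_\varphi$ on the center through the nonlinear part of the problem, namely the stability of $\psi_+$, the Hitchin duality $\psi_-=\hat{\psi}_+$, and the normalization $\psi_-\wedge\psi_+=\frac{2}{3}\omega^3$, none of which is visible to the linear obstructions of Propositions \ref{obstruction 1} and \ref{obstruction 2}. The way I would keep this tractable is to use the closedness equations to reduce each case to as few free coefficients as possible and then to exhibit, by direct computation, either a forced null central vector or an explicit sign obstruction. The single algebra for which every such obstruction dissolves is $\f{g}_1$, which is therefore left as the sole exception asserted by the lemma.
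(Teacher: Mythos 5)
Your plan is, in outline, the same as the paper's: a case-by-case elimination of the eight algebras in \eqref{list1} by reducing along a central element, writing down the general closed forms $\omega$ and $\psi_-$, imposing the full system \eqref{compatibility} together with stability and normalization, and concluding that every solution is either degenerate or has non-definite center. But there is a concrete gap in your second step: you propose to ``fix $\eta$ with $\eta(\xi)=1$''. In this setting $\eta=-\varepsilon\xi^{\flat}$ is the metric dual of $\xi$, so it is itself an unknown of the problem: reducing by $\xi=e_7$, its general form is $\eta=e^7+\sum_{i=1}^{6}\gamma_ie^i$, and the six coefficients $\gamma_i$ must be carried as free parameters through the whole elimination (the paper states this explicitly at the start of its proof). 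If you fix a single representative, say $\eta=e^7$, you only exclude those structures for which $e_7$ is $g_{\varphi}$-orthogonal to $e_1,\dots,e_6$, which does not prove the lemma. This is not a pedantic point: for $(0,0,0,12,13,14+23,15)$ the paper's closing remark notes that solutions of \eqref{compatibility} exist only when $\gamma_6\neq 0$ (and those solutions have $e_6$ null); with $\eta=e^7$ the system is simply inconsistent, and that inconsistency says nothing about the metrics you have not parametrized. Nor can you appeal to your observation that $[d\eta]\in H^2(\f{h}^*)$ is metric-independent: equations \eqref{additional equations} constrain the representative $d\eta$, not its cohomology class --- replacing $d\eta$ by $d\eta+d\beta$ changes $\omega\wedge d\eta$ by the exact but generally nonzero form $d(\omega\wedge\beta)$, and this cannot be absorbed into $\psi_+$ because $\psi_+=-J_{\psi_-}^*\psi_-$ is rigidly determined by $\psi_-$.

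Two further inaccuracies are worth flagging, although they concern predicted outcomes rather than method. In the one-dimensional-center cases the contradiction cannot be that ``$g_{\varphi}(\xi,\xi)$ is forced to vanish'': that quantity is normalized to $-\varepsilon\in\{\pm 1\}$ from the outset, and what the computations actually produce there is degeneracy, namely $\omega^3=0$ or $\lambda(\psi_-)=0$, violating stability. In the two-dimensional-center cases your predicted signature obstruction on $\langle e_6,e_7\rangle$ is indeed what happens for $(0,0,12,0,0,13+24,15)$ and $(0,0,0,12,13,14+23,15)$ (where $e_6$ comes out null), but for $(0,0,12,0,0,13,14+25)$ --- where the paper reduces by $\xi=e_6$ rather than $e_7$ --- and for $(0,0,0,12,13,14,15)$ the obstruction found is again degeneracy of the forms, not signature. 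With the $\gamma_i$ reinstated as unknowns and these corrections made, your scheme coincides with the paper's proof.
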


\begin{proof}
We have seen that if a Lie algebra $\f{g}$ admits a torsion-free
$G^*_{2(2)}$-structure with definite center, then for every
element $\xi$ in the center of $\f{g}$ we have a reduction
$\f{g}\to\f{h}=\f{g}/\mathrm{Span}\{\xi\}$ such that there are
stable forms $\omega$, $\psi_+$ and $\psi_-=\hat{\psi}_+$
satisfying
\begin{equation}\label{compatibility}d\omega=0,\hspace{1em} d\psi_-=0,\hspace{1em} \omega\wedge
d\eta+d\psi_+=0,\hspace{1em} \psi_-\wedge d\eta=0,\hspace{1em}
\omega\wedge\psi_-=0.\end{equation} For every Lie algebra $\f{g}$
in \eqref{list1} we shall find an element $\xi$ in the center such
that all the solutions of \eqref{compatibility} are either non
stable forms, or have associated metric $g_{\varphi}$ (with
$\varphi=\omega\wedge\eta+\psi_+$) not definite on the center,
yielding a contradiction to the fact that $\f{g}$ admits a
torsion-free $G^*_{2(2)}$-structure with definite center. Note
that if for example we reduce by $\xi=e_7$, the general form of
$\eta=-\varepsilon\xi^{\flat}$ is
$$\eta=e^7+\sum_{i=1}^6\gamma_ie^i,$$
for some constants $\gamma_i$ depending on $g_{\varphi}$.

\begin{itemize}

\item
$(0,0,12,0,0,13+24,15)\overset{\xi=e_7}{\rightarrow}(0,0,12,0,0,13+24)$:
For any compatible closed forms $\omega$ and $\psi_-$ on $\f{h}$,
imposing $\omega\wedge d\eta+d\psi_+=0$,and  $\psi_-\wedge
d\eta=0$ we obtain solutions such that $e_6$ is null with respect
to the corresponding metric $g_{\varphi}$ on $\f{g}$.

\item
$(0,0,12,0,24+13,14,46+24+15+23)\overset{\xi=e_7}{\rightarrow}(0,0,12,0,24+13,14)$:
For any closed forms $\omega$ and $\psi_-$ on $\f{h}$, equations
$\psi_-\wedge d\eta=0$, $\omega\wedge\psi_-=0$ and $\omega\wedge
d\eta+d\psi_+=0$ imply $\omega^3=0$.

\item
$(0,0,12,0,0,13,14+25)\overset{\xi=e_6}{\rightarrow}(0,0,12,0,0,14+25)$:
For any closed forms $\omega$ and $\psi_-$ on $\f{h}$, equation
$\omega\wedge d\eta+d\psi_+=0$ implies $\omega^3=0$.

\item $(0,0,12,0,13,24+23,25+34+16+15+3
\cdot{26})\overset{\xi=e_7}{\rightarrow}(0,0,12,0,13,24+23)$: For
any closed forms $\omega$ and $\psi_-$ on $\f{h}$, equation
$\omega\wedge d\eta+d\psi_+=0$ implies $\omega^3=0$.

\item
$(0,0,0,12,13,14+23,15)\overset{\xi=e_7}{\rightarrow}(0,0,0,12,13,14+23)$:
For any compatible closed forms $\omega$ and $\psi_-$ on $\f{h}$,
imposing $\omega\wedge d\eta+d\psi_+=0$,and  $\psi_-\wedge
d\eta=0$ we obtain solutions such that $e_6$ is null with respect
to the corresponding metric $g_{\varphi}$ on $\f{g}$.

\item
$(0,0,0,12,13,14,15)\overset{\xi=e_7}{\rightarrow}(0,0,0,12,13,14)$:
For any closed forms $\omega$ and $\psi_-$ on $\f{h}$, equations
$\psi_-\wedge d\eta=0$, $\omega\wedge\psi_-=0$ and $\omega\wedge
d\eta+d\psi_+=0$ imply $\omega^3=0$ or $\lambda(\psi_-)=0$.

\item
$(0,0,12,13,23,15+24,16+34)\overset{\xi=e_7}{\rightarrow}(0,0,12,13,23,15+24)$:
For any closed forms $\omega$ and $\psi_-$ on $\f{h}$, equations
$\psi_-\wedge d\eta=0$ and $\omega\wedge d\eta+d\psi_+=0$ imply
$\omega^3=0$.

\item
$(0,0,12,13,23,15+24,16+25+34)\overset{\xi=e_7}{\rightarrow}(0,0,12,13,23,15+24)$:
For any closed forms $\omega$ and $\psi_-$ on $\f{h}$, equations
$\psi_-\wedge d\eta=0$, $\omega\wedge\psi_-=0$ and $\omega\wedge
d\eta+d\psi_+=0$ imply $\omega^3=0$.
\end{itemize}
\end{proof}

\begin{theorem}\label{proposition clasification indecomposable}
The only indecomposable $7$-dimensional nilpotent Lie algebra
admitting a torsion-free $G_{2(2)}^*$-structure with definite
center is
$$\f{g}_1=(0,0,0,12,23,-13,2 \cdot {26}-2 \cdot{34}-2 \cdot{16}+ 2 \cdot  {25}).$$
Moreover, $\f{g}_1$  admits a $3$-parameter family of torsion-free
$G_{2(2)}^*$-structures  with holonomy equal to $G_{2(2)}^*$.
\end{theorem}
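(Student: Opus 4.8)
The argument splits into the uniqueness statement and the full-holonomy statement. For uniqueness, Lemma~\ref{lemma eliminating} already removes every algebra of \eqref{list1} except $\f{g}_1$, so it remains only to exhibit one torsion-free $G^*_{2(2)}$-structure with definite center on $\f{g}_1$, which also initiates the second part. I would reduce by the central element $\xi=e_7$, so that $\f{h}=\f{g}_1/\mathrm{Span}\{e_7\}=(0,0,0,12,23,-13)$ and, writing $\eta=e^7+\sum_{i=1}^{6}\gamma_i e^i$, the curvature form is $d\eta=de^7+\gamma_4 e^{12}+\gamma_5 e^{23}-\gamma_6 e^{13}$ with $de^7=2(e^{26}-e^{34}-e^{16}+e^{25})$. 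By the reduction scheme behind \eqref{additional equations}, building the structure amounts to solving the system \eqref{compatibility} for a compatible, normalized, half-flat pair on $\f{h}$: I would write $\omega,\psi_+,\psi_-$ with undetermined coefficients in the basis $\{e^i\}$, impose the linear conditions $d\omega=d\psi_-=0$, the two curvature equations, and $\omega\wedge\psi_-=0$ together with the stability and normalization constraints, and produce an explicit solution whose induced inner product $g_\varphi$ is definite on $\mathrm{Span}\{e_7\}$. Forming $\varphi=\omega\wedge\eta+\psi_+$ and verifying $d\varphi=0$, $d(*\varphi)=0$ directly from \eqref{varphi}--\eqref{starvarphi} then certifies the structure and finishes uniqueness.

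For the family I would retain, rather than specialize, the free constants surviving the linear part of \eqref{compatibility}; after rescaling through $\varphi_t=t^{1/2}\,\omega\wedge\eta+\psi_+$ as in the Remark where convenient, this should leave a $3$-parameter set of solutions with the center still definite, which I would record as an explicit family $\varphi_{a,b,c}$. That distinct parameters give genuinely inequivalent metrics (so that the family is honestly $3$-dimensional) would be read off from a curvature invariant computed in the next step.

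Since each $\varphi_{a,b,c}$ is torsion-free, its holonomy is contained in $G^*_{2(2)}$ by the Proposition, so the task is to show it lies in no proper subgroup, i.e.\ that the holonomy algebra fills the $14$-dimensional $\f{g}^*_{2(2)}\subset\f{so}(4,3)$. I would compute the Levi-Civita connection of $g_{\varphi_{a,b,c}}$ from the Koszul formula and then the curvature endomorphisms $R(e_i,e_j)\in\f{g}^*_{2(2)}$. Invariant metrics on Lie groups are real-analytic, so restricted holonomy coincides with infinitesimal holonomy, and by the Ambrose--Singer theorem the holonomy algebra is spanned by the $R(e_i,e_j)$ together with their iterated covariant derivatives $(\nabla_{e_{k_1}}\cdots\nabla_{e_{k_r}}R)(e_i,e_j)$. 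I would first test whether the $R(e_i,e_j)$ alone span $\f{g}^*_{2(2)}$ and, if not, adjoin the first and, if necessary, higher covariant derivatives; expanding every resulting endomorphism in a fixed matrix basis of $\f{g}^*_{2(2)}$ and showing the collection has rank $14$ for generic $(a,b,c)$ gives holonomy equal to $G^*_{2(2)}$ on an open subset of the parameter space.

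The crux is this final rank computation. The curvature operators alone will in general span only a proper subalgebra, so the covariant derivatives are indispensable, and the entire calculation must be carried out with the three parameters present. The practical way to keep it tractable is to fix an explicit basis of $\f{g}^*_{2(2)}$ --- for instance the $14$ among the $21$ generators of $\f{so}(4,3)$ that annihilate $\varphi_0$ --- express each $R$ and $\nabla R$ in that basis, and reduce the full-rank claim to the non-vanishing of a single $14\times 14$ minor, a polynomial in $(a,b,c)$ that one checks is not identically zero. A minor additional point, matching the non-symmetric assertion of the introduction, is that $\nabla R\neq 0$, which drops out of the same computation.
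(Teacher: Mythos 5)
Your proposal is correct and follows essentially the same route as the paper: uniqueness via Lemma~\ref{lemma eliminating} together with an explicit construction on $\f{g}_1$ obtained by reducing along the center and solving the system \eqref{compatibility} for stable, compatible, normalized forms, and then full holonomy by checking that the curvature endomorphisms and their first covariant derivatives span the $14$-dimensional $\f{g}^*_{2(2)}$, invoking Ambrose--Singer. The only (inessential) difference is that the paper first parametrizes \emph{all} solutions with $\eta=e^7$ in a convenient basis, obtaining a space with nine real parameters plus a sign, and then sets six of them to zero to isolate the $3$-parameter subfamily $(r_4,m_5,m_6)$ on which the rank-$14$ computation is actually carried out, rather than expecting the free constants themselves to number three.
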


\begin{proof}
We first show that $\f{g}_1$ admits torsion-free
$G_{2(2)}^*$-structures with definite center. We
consider  a  new basis $\{e_1,\ldots,e_7\}$   of $\f{g}_1$  with respect to which $\f{g}_1$
has structure equations
$$(0,0,0,12,13,23,-2\cdot 25-2\cdot 34+2\cdot 15+2\cdot 26)$$
(this can be done by setting $e_5=-e_6'$, $e_6=e_5'$, and
$e_i=e_i'$ for $i\neq 5,6$, where by  $\{e_1',\ldots,e_7'\}$  we denote
the old basis  of  ${\frak g}_1$). Consider the reduction
$$\f{g}_1\rarrow \f{h}_1=\frac{\f{g}_1}{\mathrm{span}\{e_7\}}\cong (0,0,0,12,13,23).$$
Let $\varphi$ be a $G_{2(2)}^*$-structure on $\f{g}_1$. As before
we write
$$\varphi=\omega\wedge \eta+\psi_+, \qquad
*\varphi=\varepsilon\psi_-\wedge\eta-\frac{1}{2}\varepsilon\omega\wedge\omega.$$
We will now compute all torsion-free $G_{2(2)}^*$-structures with
definite center satisfying $\eta = e^7$ and
$g_{\varphi}(e_7,e_7)=-\varepsilon\in\{\pm 1\}$, by solving
equations \eqref{compatibility} for stable forms $\omega$,
$\psi_+$ and $\psi_-=\hat{\psi}_+$ on $\f{h}_1$. We will also
impose the normalization condition
$\psi_-\wedge\psi_+=\frac{2}{3}\omega^3.$ The closed $2$-forms  on
$\f{h}_1$ are given by
\begin{align}
\omega
&=r_1e^{12}+r_2e^{13}+r_3e^{14}+r_4e^{15}+r_5e^{23}+r_6e^{24}+r_7e^{26}+r_8e^{35}\nonumber\\
&+r_9e^{36}+r_{10}(e^{16}+e^{25})+r_{11}(e^{16}-e^{34}),
\end{align}
for some   parameters $r_1, \ldots, r_{11} \in \R$. The
non-degeneracy condition $\omega^3\neq 0$ is thus
\begin{equation}\label{mu}
r_3(r_7r_8-r_9r_{10})+r_{4}(r_{6}r_9+r_7r_{11})-(r_6 r_8 + r_{10}
r_{11})  (r_{10}+r_{11}) \neq 0.
\end{equation}
For the second equation of \eqref{compatibility}, it is also easy
to compute the space of closed $3$-forms. In this case imposing
also the fourth equation we obtain that $\psi_-$ must be of the
form
\begin{align}
\psi_- & =m_1(e^{125}-e^{234})+m_2(e^{126}+e^{134}-e^{125})+m_3(e^{135}-e^{136})\nonumber\\
& +m_4(e^{135}-e^{236})+m_5(e^{145}-e^{146}-e^{245})+m_6(e^{145}-e^{246})\nonumber\\
&+m_7e^{123}  +m_8e^{124}+m_9e^{235}+m_{10}e^{356},
\end{align}
for some parameters $m_1,\ldots,m_{10}\in\R$. The non-degeneracy
condition for $\psi_-$ is $\lambda(\psi_-)\neq 0$ (which is
equivalent to $\lambda(\psi_+)\neq 0$) where $\lambda$ is given by
\eqref{lambda stable}. To impose the third equation of
\eqref{compatibility} we have to compute $\psi_+$. Since
$\psi_-=J_{\psi_+}^*\psi_+$, where $J_{\psi_+}$ is given by
\eqref{J rho stable}, taking into account
$J_{\psi_+}^2=\varepsilon$ and \eqref{estructuras complejas} we
obtain $\psi_+=-J_{\psi_-}^*\psi_-$.

The two remaining conditions are then the vanishing of the forms
\begin{align}\label{compat1}&\omega\wedge\psi_-,\\
\label{compat2}& d\psi_++\omega\wedge d\eta.\end{align} Solving
these two equations and imposing the normalization condition, we
obtain after a quite long computation a parametrization of the
space of torsion-free $G_{2(2)}^*$ structures on $\f{g}_1$ such
that $g_{\varphi}(e_7,e_7)=-\varepsilon$. Such parame\-trization
is a subset of $\bb{R}^{11}\times\bb{R}^{10}\times\bb{Z}_2$ given
by
$$
\begin{array}{c}
r_3=0,\hspace{1em} r_6=0,\hspace{1em} r_8=0,\hspace{1em} r_9=0,\hspace{1em} r_7=r_4,\hspace{1em} r_{11}=-r_4,\hspace{1em}r_{10}=0,\\[4 pt] m_1 =-\frac{1}{3}\frac{r_2m_6+r_5m_5+2r_2m_5+2r_5m_6}{r_4}, \quad
m_2=-\frac{1}{3}\frac{2r_2m_6-r_5m_5+r_2m_5+r_5m_6}{r_4},\\[4pt]
m_3 =-\frac{r_4m_9+r_1m_{10}}{r_4},
\end{array}
$$
$$
\begin{array}{lcl}
m_7 &=&-\frac{1}{12 m_{10}^2r_4^2(m_5^2+m_6^2+m_5m_6)^2}
\left( 18 m_5m_9m_6^2m_8m_{10}^2r_4^2  +9 m_5^4m_4^2r_4^2m_{10}\right.\nonumber\\
&&+ 18 m_4m_5^2m_9m_6r_4^2m_{10} -18 m_4m_5m_{10}^2m_6^2m_8r_4^2 +9 m_9^2m_6^4r_4^2m_{10}\nonumber\\
&& + 18 m_4m_5^2m_9m_6^2r_4^2m_{10} +18 m_5^2m_6m_8m_{10}^2r_4^2m_9-12 m_5^2m_6^2r_1^2m_{10}^3\nonumber\\
&& +6 m_4m_6^3m_5r_4r_1m_{10}^2-6 m_5^3m_9m_6r_4r_1m_{10}^2+12 m_5m_6^2m_8m_{10}^3r_4r_1\nonumber\\
&& +8 m_{10}^2m_6^4r_5^2m_5+8 m_{10}^2m_6^3r_2^2m_5^2+8 m_{10}^2m_5^4r_2^2m_6 +8 m_{10}^2m_6^5r_2r_5\nonumber\\
&& +12 m_{10}^2m_5^3r_2^2m_6^2+4 m_{10}^2m_5^5r_2^2+4 m_{10}^2m_6^5r_5^2+8 m_{10}^2m_5^5r_5r_2\nonumber\\
&& +12 m_{10}^2m_6^3r_5^2m_5^2-3 m_8^2m_{10}^2m_6^2r_4^2-3 m_8^2m_{10}^2m_5^2r_4^2 +8 m_{10}^2m_5^3r_5^2m_6^2\nonumber\\
&&+4 m_{10}^2m_5^4r_5^2m_6+18 m_4m_6^3m_5r_4^2m_9m_{10}  -18 m_4m_5^2m_{10}^2m_6m_3r_4^2  \nonumber\\
&&-9 m_5m_6^3r_1^2m_{10}^3+4 m_{10}^2m_{6}^4r_2^2m_5+6 m_5^3m_8m_{10}^3r_4r_1-3 m_6^4r_1^2m_{10}^3\nonumber\\
&&-9 m_5^3m_6r_1^2m_{10}^3+6 m_5m_{10}^3m_6^3r_4r_1+6 m_9m_6^4r_4r_1m_{10}^2 -3 m_5^4r_1^2m_{10}^3\nonumber\\
&& -6 m_4m_5^4r_4r_1m_{10}^2+9 m_5m_9^2m_6^3r_4^2m_{10}+6 m_8m_{10}^2m_6^3r_4^2m_9\nonumber\\
&& +9 m_5^2m_9^2m_6^2r_4^2m_{10}-6 m_4m_5^3m_{10}^2m_8r_4^2-12 m_4m_6^3m_8m_{10}^2r_4^2\nonumber\\
&& -3 m_5m_8^2m_{10}^3m_6r_4^2+40 m_{10}^2m_6^3r_5m_5^2r_2+24 m_{10}^2m_5^4r_2m_6r_5\nonumber\\
&& +40 m_{10}^2m_5^3r_2m_6^2r_5+9 m_5^3m_4^2m_6r_4^2m_{10}+9 m_5^2m_4^2m_6^2r_4^2m_{10}\nonumber\\
&& +12 m_5^2m_8m_{10}^2r_4^2m_9+24
m_{10}^2m_6^4r_2r_5m_5+3\phi^3r_4^3 ),
\end{array}
$$

$$m_{10}=2\varepsilon\frac{r_4^4}{m_5^2+m_6^2+m_5m_6},\qquad\phi=-2r_4^3,$$
and non-degeneracy conditions $r_4\neq 0$, $m_5^2+m_6^2+m_5m_6\neq
0$.  Note that this space has (at least) four connected components
due to the value of $(\varepsilon,\mathrm{sign}(r_4))$. The space
of free parameters
\begin{equation}\label{free paramet case full hol
1}\left(r_1,r_2,r_4,r_5,m_4,m_5,m_6,m_8,m_9,\varepsilon\right)\end{equation}
is then an open set of $\bb{R}^4\times\bb{R}^5\times\bb{Z}_2$
given by the non-degeneracy condition
$$r_4\neq 0,\qquad m_5^2+m_6^2+m_5m_6\neq 0.$$

The rest of the proof is devoted to exhibit a subfamily of
torsion-free $G_{2(2)}^*$ structures on $\f{g}_1$ with full
holonomy $G_{2(2)}^*$. Since the non-degeneracy conditions only
involve $(r_4,m_5,m_6)$, we choose the values
\begin{equation}\label{choice of parameters}r_1=0,\hspace{1em} r_2=0,\hspace{1em}r_5=0,\hspace{1em}m_4=0,\hspace{1em}m_8=0,\hspace{1em}m_9=0.\end{equation}
Recall that since the pair $(\omega,\psi_+)$ is normalized, the
inner product  $g_{\varphi}$ on $\f{g}_1$ is obtained as
$g_{\varphi}=h-\varepsilon\eta\otimes\eta,$ where $h$ is the inner
product  associated to $(\omega,\psi_+)$. Evaluating at
\eqref{choice of parameters} we obtain for $g_{\varphi}$ the
matrix representation
\begin{equation*}
{\small \begin{pmatrix}
-\frac{m_6}{2} & \frac{m_5+m_6}{2} & 0 & 0 & 0 & 0 & 0\\
\frac{m_5+m_6}{2} & -\frac{ m_5}{2} & 0 & 0 & 0 & 0 & 0\\
0 & 0 & \frac{\varepsilon r_4^4}{(m_5,m_6)} & 0 & 0 & 0 & 0\\
0 & 0 & 0 & -\frac{(m_5,m_6)}{r_4^2} & 0 & 0 & 0\\
0 & 0 & 0 & 0 & -\frac{2\varepsilon r_4^2m_5}{(m_5,m_6)} & -\frac{2\varepsilon r_4^2m_6}{(m_5,m_6)} & 0\\
0 & 0 & 0 & 0 & -\frac{2\varepsilon r_4^2m_6}{(m_5,m_6)} & \frac{2\varepsilon r_4^2(m_5+m_6)}{(m_5,m_6)} & 0\\
0 & 0 & 0 & 0 & 0 & 0 & -\varepsilon
\end{pmatrix},}
\end{equation*}
where $(m_5,m_6)$ stands for $m_5^2+m_6^2+m_5m_6$.

In order to show that this family of metrics has full holonomy
$G_{2(2)}^*$ we show that the Lie algebra spanned by the curvature
endomorphisms $R_{XY}:\f{g}_1\to\f{g}_1$, $X,Y\in\f{g}_1$, and the
endomorphisms $(\nabla_ZR)_{XY}:\f{g}_1\to\f{g}_1$,
$Z,X,Y\in\f{g}_1$, has dimension equal to $14$. As it is well
known (see \cite{Ambrose-Singer}), the holonomy algebra $\f{hol}$
of $g_{\varphi}$ contains this Lie algebra. Therefore since
$\f{hol}\subset\f{g}^*_{2(2)}$ and $\mathrm{dim}(\f{g}_{2(2)})=14$
we have that $\f{hol}=\f{g}^*_{2(2)}$, so that $g_{\varphi}$ has
full holonomy $G^*_{2(2)}$. A set of linearly
independent endomorphisms spanning $\f{hol}$ can be found in the
Appendix at the end of the manuscript.
\end{proof}

The three parameter family of metrics with full holonomy
constructed on $\f{g}_1$ provides a family of left-invariant
metrics with full holonomy $G^*_{2(2)}$ on the simply-connected
nilpotent Lie group $G$ associated to $\f{g}_1$. It is worth
noting the difference with the Riemannian counterpart, where any
homogeneous Ricci-flat metric must be flat. Moreover, since the
structure equations of $\f{g}_1$ are rational, $G$ admits a
co-compact lattice $\Gamma$ (see \cite{Malcev}). Therefore the
family of left-invariant metrics on $G$ induce a family of metrics
with full holonomy $G^*_{2(2)}$ on the compact nilmanifold
$\Gamma\setminus G$.

\begin{remark}
So far, the authors have not found any value of the parameters
\eqref{free paramet case full hol 1} such that $g_{\varphi}$ has
not holonomy equal to $G_{2(2)}^*$.
\end{remark}

\begin{remark}
Note that we have parameterized all  torsion-free
$G_{2(2)}^*$-struc\-tures with definite center on $\f{g}_1$ but we
have not studied their isomorphism classes.
There are choices for the parameters that give isometric
structures and choices that give non-isometric structures. For
example setting \eqref{choice of parameters}, the choices
$\{r_4=-1,m_5=1,m_6=-1\}$ and $\{r_4=-1,m_5=-1,m_6=1\}$ for
$\varepsilon=\pm 1$ give isometric metrics.
%
On the other hand, the choices $\{r_4=-1,m_5=1,m_6=0\}$ and
$\{r_4=-2,m_5=1,m_6=0\}$ give non-isometric metrics.
\end{remark}

As it has been pointed out, the condition that a nilpotent Lie
algebra  has definite center with respect to $g_{\varphi}$ is
rather strong. New examples of torsion-free
$G^*_{2(2)}$-structures can be obtained relaxing this condition
and just asking the existence of a non-null element in the center
of the Lie algebra. This is done for instance in the proof of
Lemma \ref{lemma eliminating} for the Lie algebras
$(0,0,12,0,0,13+24,15)$ and $(0,0,0,12,13,14+23,15)$.

\begin{example} Consider the   Lie algebra  $\f{g}= (0,0,0,12,13,15,14+23)$ and the reduction
$$(0,0,0,12,13,15,14+23)\overset{e_7}{\to} \f{h}=(0,0,0,12,13,15).$$
We construct a pair $(\omega,\psi_-)$ of stable forms on $\f{h}$
satisfying \begin{equation}\label{equations
example}d\omega=0,\hspace{1em} d\psi_-=0,\hspace{1em} \omega\wedge
d\eta+d\psi_+=0,\hspace{1em} \psi_-\wedge d\eta=0,\hspace{1em}
\omega\wedge\psi_-=0,\end{equation} and the normalization
condition
$$\psi_-\wedge\psi_+=\frac{2}{3}\omega^3.$$
For the sake of simplicity we shall restrict ourselves to the case
$\eta=e^7$. The closed $2$-forms on $\f{h}$ are given by
\begin{align*}
\omega
&=r_{12}e^{12}+r_{13}e^{13}+r_{14}e^{14}+r_{15}e^{15}+r_{16}e^{16}+r_{23}e^{23}+r_{24}e^{24}\nonumber\\&
+r_{34}(e^{25}+e^{34})+r_{35}e^{35},
\end{align*}
for some   parameters $r_{ij} \in \R$. The non-degeneracy
condition $\omega^3\neq 0$ is thus
\begin{equation*}
-r_{16}r_{24}r_{35}+r_{16}r_{34}^2\neq 0.
\end{equation*}
A closed $3$-form $\psi_-$ is of the form
\begin{align*}
\psi_- & =  m_{123}e^{123}+m_{124}e^{124}+m_{125}e^{125}+m_{126}e^{126}\nonumber\\
& +m_{134}e^{134}+m_{135}e^{135}+m_{136}e^{136}+m_{145}e^{145}\nonumber\\
&
+m_{146}e^{146}+m_{156}e^{156}+m_{234}e^{234}+m_{235}e^{235}\nonumber\\
& +m_{345}(e^{236}+e^{345})+m_{356}e^{356},
\end{align*}
for some parameters $m_{ijk}\in\R$. The non-degeneracy condition
for $\psi_-$ is $\lambda(\psi_-)\neq 0$. Solving \eqref{equations
example} together with the normalization condition we obtain
$$r_{14}=-r_{23},\hspace{1em}r_{34}=0,\hspace{1em}r_{16}=r_{35},$$
$$m_{145}+m_{235}=0,\hspace{1em}m_{345}+m_{146}=0,\hspace{1em}m_{356}=m_{156}=0,$$
\begin{align*}
m_{124}&=-\frac{1}{r_{35}}\left(2m_{235}r_{23}+m_{135}r_{24}+m_{234}r_{15}-m_{345}r_{12}\right)\\
m_{126}&=\frac{1}{r_{35}}\left(m_{235}r_{35}-m_{345}r_{15}\right)\\
m_{136}&=-\frac{1}{r_{24}}\left(m_{234}r_{35}+2m_{345}r_{23}\right)\\
m_{125}&=\frac{1}{8m_{345}^6r_{35}^2}\left(6m_{345}^5m_{235}^2r_{35}^2-4m_{345}^6m_{235}r_{35}r_{15}\right.\\
&\left.-2m_{345}^7r_{15}^2+r_{35}^3\phi^3 \right)
\end{align*}
$$r_{24}=-\varepsilon\frac{|m_{345}|^3}{|r_{35}|^3},\qquad \phi=2\varepsilon\frac{|m_{345}|^3}{|r_{35}|},$$
with non-degeneracy condition $r_{35}\neq 0$ and $m_{345}\neq 0$.
Note that this parametrization has at least eight connected
components due to the value of the triple
$(\varepsilon,\mathrm{sign}(m_{345}),\mathrm{sign}(r_{35}))$. The
space of free parameters
\begin{equation}\label{free paramet case full
hol
2}(r_{12},r_{13},r_{15},r_{23},r_{35},m_{123},m_{134},m_{135},m_{234},m_{235},m_{345},\varepsilon)\end{equation}
is thus an open set of $\R^5\times\R^6\times\mathbb{Z}_2$ given by
equations $m_{345}\neq 0$ and $r_{35}\neq 0$.

Since the degeneracy condition only involves $r_{35}$ and
$m_{345}$, we select the subfamily of $G^*_{2(2)}$-structures
given by the choice
$$r_{12}=0,r_{13}=0,r_{15}=0,r_{23}=0,m_{123}=0,m_{134}=0,m_{135}=0,$$
$$m_{234}=0,m_{235}=0.$$
For the sake of simplicity we also suppose $r_{35}>0$ and
$m_{345}>0$ (the other cases are analogous). Evaluating at these
values, we obtain
\begin{align*}
\varphi & = \varepsilon
\frac{m_{345}^2}{r_{35}}\left(-e^{145}+e^{126}+e^{235}\right)
-\varepsilon
\frac{m_{345}^3}{r_{35}^3}e^{247}+r_{35}\left(e^{346}+e^{167}+e^{357}\right),
\end{align*}
whence $g_{\varphi}$ has matrix representation
$$\begin{pmatrix}
0 & 0 & 0 & 0 & m_{345} & 0\\
0 & \frac{\varepsilon m_{345}^4}{r_{35}^4} & 0 & 0 & 0 & 0 & 0\\
0 & 0 & 0 & 0 & 0 & \frac{\varepsilon r_{35}^2}{m_{345}} & 0\\
0 & 0 & 0 & -\frac{m_{345}^2}{r_{35}^2} & 0 & 0 & 0\\
m_{345} & 0 & 0 & 0 & 0 & 0 & 0\\
0 & 0 & \frac{\varepsilon r_{35}^2}{m_{345}} & 0 & 0 & 0 & 0\\
0 & 0 & 0 & 0 & 0 & 0 & -\varepsilon
\end{pmatrix}.$$
Note that $e_6$ is null as expected. A straightforward computation
shows that the space spanned by the curvature endomorphisms of
$g_{\varphi}$ and its first derivatives has dimension $6$. In
addition, the second order derivatives of the curvature are
linearly dependent with the curvature and its first derivatives.
This implies that the holonomy of $g_{\varphi}$ is
$6$-dimensional. So far, the authors have not found any value of
the parameters \eqref{free paramet case full hol 2} such that
$g_{\varphi}$ has not $6$-dimensional holonomy.
\end{example}

\begin{remark}
As in the previous example, all the torsion-free
$G^*_{2(2)}$-struc\-tures on the Lie algebra
$(0,0,0,12,13,14+23,15)$ obtained by the authors have
$6$-dimensional holonomy. In this case, in order to obtain
solutions to \eqref{compatibility} one must consider
$\eta=e^7+\sum_{i=1}^6\gamma_ie^i$, with $\gamma_6\neq 0$.
\end{remark}

\begin{remark} The reduction procedure shown at the beginning of
this section gives a simple method for constructing torsion-free
$G^*_{2(2)}$-structures on decomposable $7$-dimen\-sional Lie
algebras of the form $\f{g}=\f{h}\oplus\R$. Recall that by
\cite{Gong} the only decomposable $7$-dimensional nilpotent Lie
algebra which is not of this form is $(0,0,0,0,12,34,36)$.

Let $\f{g}=\f{h}\oplus\R$ be a $7$-dimensional nilpotent Lie
algebra. Let $\varphi$ be a torsion-free $G^*_{2(2)}$-structure on
$\f{g}$ such that the summand $\R$ is not null and orthogonal to
$\f{h}$  with respect to $g_{\varphi}$. The reduction
$\f{g}\to\f{h}$ has curvature form $d\eta=0$, so that writing
again \eqref{varphi} and \eqref{starvarphi}, equations
\eqref{additional equations} transform into $d\psi_+=0$.
Therefore, constructing a torsion-free $G_{2(2)}^*$-structure on
$\f{g}$ with non-null summand $\R$ orthogonal to $\f{h}$ is
equivalent to construct a torsion-free almost
$\varepsilon$-special Hermitian structure on $\f{h}$, i.e., a
compatible pair $(\omega,\psi_+)$ with $d\omega=0$, $d\psi_+=0$,
$d\psi_-=0$  $($\cite{CCLS}$)$. Note that with this procedure the
holonomy of $g_{\varphi}$ will be equal to the holonomy of
$g_{\varphi|\f{h}}$, and in particular it will be contained in
$SL(3,\R)$ or $SU(2,1)$.
\end{remark}

\section*{Appendix}

We denote by $e_i^j$ the endomorphism  $e_i\otimes e^j$ of
$\f{g}_1$ and   $(m_5,m_6)=m_5^2+m_6^2+m_5m_6$, where $\{e_1, \ldots, e_7\}$ is the basis such that $\f{g}_1$ has structure equations $(0,0,0,12,13,23,-2\cdot 25-2\cdot 34+2\cdot 15+2\cdot 26)$.   By direct
calculation, a set of  linearly independent endomorphisms of $\f{g}_1$ spanning
$\f{hol}$ is
\begin{align*}
R_{e_1e_2} &
=-\frac{3(m_5+m_6)}{2r_4^2}e_1^1-\frac{3m_6}{2r_4^2}e_1^2+\frac{3m_5}{2r_4^2}e_2^1+\frac{3(m_5+m_6)}{2r_4^2}e_2^2-\frac{1}{2}e_3^7\\
&+\frac{m_6}{r_4^2}e_5^5-\frac{m_5}{r_4^2}e_5^6-\frac{m_5+m_6}{r_4^2}e_6^5-\frac{m_6}{r_4^2}e_6^6-\frac{(m_5,m_6)}{2r_4^2}e_7^3,\\
R_{e_1e_3} &
=-\frac{3m_5}{2r_4^2}e_1^3+\frac{1}{2}e_1^7-\frac{2m_6}{2r_4^2}e_2^3-\frac{1}{2}e_2^7+\frac{3\e
r_4}{(m_5,m_6)}e_3^1+\frac{3\e r_4^2}{(m_5,m_6)}e_3^2\\
&+\frac{m_5+m_6}{r_4^2}e_4^5+\frac{m_6}{r_4^2}e_4^6-\frac{2\e
r_4^2}{(m_5,m_6)}e_5^4-\frac{\e m_6}{(m_5,m_6)}e_7^1+\frac{\e
m_5}{(m_5,m_6)}e_7^2,\\
R_{e_1e_4} &
=\frac{m_6}{2r_4^2}e_1^4-\frac{m_5+m_6}{2r_4^2}e_2^4-\frac{m_5+m_6}{2r_4^2}e_3^5-\frac{m_6}{2r_4^2}e_3^6-\frac{(m_5,m_6)}{r_4^4}e_4^1\\
&
-\frac{(m_5,m_6)}{r_4^4}e_5^3-\frac{m_5}{r_4^2}e_5^7-\frac{m_6}{r_4^2}e_6^7+\frac{(m_5,m_6)}{2r_4^4}e_7^5,\\
R_{e_1e_5} &
=\frac{4m_5+3m_6}{2r_4^2}e_1^5+\frac{3m_6}{2r_4^2}e_1^6-\frac{3m_5}{2r_4^2}e_2^5-\frac{2m_5+3m_6}{2r_4^2}e_2^6-\frac{2\e
r_4^2}{(m_5,m_6)}e_3^4\\
&-\frac{3(m_5,m_6)}{r_4^4}e_4^3+\frac{m_5}{r_4^2}e_4^7+\frac{2\e(m_5-3m_6)}{(m_5,m_6)}e_5^1+\frac{8\e
m_5}{(m_5,m_6)}e_5^2\\ &+\frac{2\e(2m_5+3m_6)}{(m_5,m_6)}e_6^1+\frac{6\e m_6}{(m_5,m_6)}e_6^2-\frac{\e m_5}{(m_5,m_6)}e_7^4,\\
R_{e_1e_6} & =
\frac{m_6}{2r_4^2}e_1^5+\frac{m_6}{2r_4^2}e_2^6+\frac{m_6}{r_4^2}e_4^7+\frac{2\e
m_6}{(m_5,m_6)}e_5^1+\frac{2\e m_6}{(m_5,m_6)}e_5^2-\frac{2\e
m_6}{(m_5,m_6)}e_6^1\\ &-\frac{\e m_6}{(m_5,m_6)}e_7^4,\\
 R_{e_1e_7}
&
=-\frac{(m_5,m_6)}{2r_4^4}e_1^3-\frac{m_5+m_6}{2r_4^2}e_1^7+\frac{m_5}{2r_4^2}e_2^7+\frac{\e
m_5}{(m_5,m_6)}e_3^1+\frac{\e(m_5+m_6)}{(m_5,m_6)}e_3^2\\
&+\frac{(m_5,m_6)}{r_4^4}e_4^5-\frac{2\e
m_5}{(m_5,m_6)}e_5^4-\frac{2\e
m_6}{(m_5,m_6)}e_6^4-\frac{\e}{r_4^2}e_7^2,
\end{align*}
\begin{align*}
 R_{e_2e_3} &
=-\frac{3m_5}{2r_4^2}e_1^3+\frac{3(m_5+m_6)}{2r_4^2}e_2^3+\frac{1}{2}e_2^7-\frac{3\e
r_4}{(m_5,m_6)}e_3^1-\frac{m_5}{r_4^2}e_4^5-\frac{m_5+m_6}{r_4^2}e_4^6\\
&+\frac{2\e r_4^2}{(m_5,m_6)}e_5^4-\frac{2\e
r_4^2}{(m_5,m_6)}e_6^4+\frac{\e(m_5+m_6)}{(m_5,m_6)}e_7^1+\frac{\e
m_6}{(m_5,m_6)}e_7^2,\\
 R_{e_2e_4} &
=-\frac{m_5+m_6}{2r_4^2}e_1^4+\frac{m_5}{2r_4^2}e_2^4+\frac{m_5}{2r_4^2}e_3^5+\frac{m_5+m_6}{2r_4^2}e_3^6-\frac{(m_5,m_6)}{r_4^4}e_4^2
\\
&+\frac{(m_5,m_6)}{r_4^4}e_5^3-\frac{m_6}{r_4^2}e_5^7-\frac{(m_5,m_6)}{r_4^4}e_6^3+\frac{m_5+m_6}{r_4^2}e_6^7+\frac{(m_5,m_6)}{2r_4^4}e_7^6,\\
R_{e_2e_7} &
=\frac{(m_5,m_6)}{2r_4^4}e_1^3+\frac{m_5}{2r_4^2}e_1^7-\frac{(m_5,m_6)}{2r_4^4}e_2^3+\frac{m_6}{2r_4^2}e_2^7+\frac{\e
m_6}{(m_5,m_6)}e_3^1\\ &-\frac{\e
m_5}{(m_5,m_6)}e_3^2+\frac{(m_5,m_6)}{r_4^4}e_4^6-\frac{2\e
m_6}{(m_5,m_6)}e_5^4+\frac{2\e(m_5+m_6)}{(m_5,m_6)}e_6^4+\frac{\e}{r_4^2}e_7^1+\frac{\e}{r_4^2}e_7^2,\\
R_{e_3e_7} & = \frac{\e(m_5+m_6)}{(m_5,m_6)}e_1^1+\frac{\e
m_6}{(m_5,m_6)}e_1^2-\frac{\e
m_5}{(m_5,m_6)}e_2^1-\frac{\e(m_5+m_6)}{(m_5,m_6)}e_2^2-\frac{\e
r_4^2}{(m_5,m_6)}e_3^7\\ &-\frac{2\e
m_6}{(m_5,m_6)}e_5^5+\frac{2\e
m_5}{(m_5,m_6)}e_5^6+\frac{2\e(m_5+m_6)}{(m_5,m_6)}e_6^5+\frac{2\e
m_6}{(m_5,m_6)}e_6^6-\frac{\e}{r_4^2}e_7^3,
\end{align*}
\begin{align*}
(\nabla_{e_1}R)_{e_1e_2} &
=\frac{m_6}{r_4^2}e_1^4-\frac{m_5+m_6}{r_4^2}e_2^4-\frac{m_5}{2r_4^2}e_3^5-\frac{m_5+m_6}{2r_4^2}e_3^6-\frac{2(m_5,m_6)}{r_4^4}e_4^1\\
&-\frac{(m_5,m_6)}{r_4^4}e_5^3-\frac{m_5+m_6}{r_4^2}e_5^7+\frac{(m_5,m_6)}{r_4^4}e_6^3+\frac{m_5}{r_4^2}e_6^7+\frac{(m_5,m_6)}{2r_4^4}e_7^5\\
&+\frac{(m_5,m_6)}{2r_4^4}e_7^6,\\
(\nabla_{e_1}R)_{e_1e_3} &
=\frac{2m_5+m_6}{r_4^2}e_1^5+\frac{m_6}{r_4^2}e_1^6-\frac{m_5-2m_6}{2r_4^2}e_2^5-\frac{m_5+m_6}{2r_4^2}e_2^6-\frac{4\e
r_4^4}{(m_5,m_6)}e_3^4\\
&-\frac{4(m_5,m_6)}{r_4^4}e_4^3-\frac{m_6}{r_4^2}e_4^7+\frac{2\e(3m_5-m_6)}{(m_5,m_6)}e_5^1
+\frac{2\e(4m_5+m_6)}{(m_5,m_6)}e_5^2\\
&+\frac{2\e(m_5+3m_6)}{(m_5,m_6)}e_6^1+\frac{6\e
m_6}{(m_5,m_6)}e_6^2+\frac{\e m_6}{(m_5,m_6)}e_7^4,\\
(\nabla_{e_1}R)_{e_1e_7} &
=-\frac{2m_5m_6+m_5^2+m_6^2}{r_4^4}e_1^5-\frac{m_6(m_5+m_6)}{r_4^4}e_1^6+\frac{3m_5m_6+3m_5^2+m_6^2}{2r_4^4}e_2^5\\
&+\frac{3m_5m_6+m_5^2+m_6^2}{2r_4^4}e_2^6+\frac{\e
m_6}{(m_5,m_6)}e_3^4+\frac{m_6(m_5,m_6)}{r_4^6}e_4^3-\frac{2(m_5,m_6)}{r_4^4}e_4^7\\
&+\frac{2\e(2m_5m_6+m_5^2+m_6^2)}{r_4^2(m_5,m_6)}e_5^1-\frac{2\e(m_5m_6+2m_5^2+m_6^2)}{r_4^2(m_5,m_6)}e_5^2\\
&-\frac{2\e m_5(m_5+m_6)}{r_4^2(m_5,m_6)}e_6^1-\frac{2\e
m_5m_6}{r_4^2(m_5,m_6)}e_6^2+\frac{2\e}{r_4^2}e_7^4,
\end{align*}
\begin{align*}
(\nabla_{e_2}R)_{e_1e_2} &
=-\frac{m_5+m_6}{r_4^2}e_1^4+\frac{m_5}{r_4^2}e_2^4-\frac{m_6}{2r_4^2}e_3^5+\frac{m_5}{2r_4^2}e_3^6-\frac{2(m_5,m_6)}{r_4^4}e_4^2\\
&+\frac{m_5}{r_4^2}e_5^7
-\frac{(m_5,m_6)}{r_4^4}e_6^3+\frac{m_6}{r_4^2}e_6^7-\frac{(m_5,m_6)}{2r_4^4}e_7^5.
\end{align*}

%


\begin{thebibliography}{99}

\bibitem{Ambrose-Singer} W. Ambrose, I.M. Singer, \emph{A Theorem on Holonomy}, Trans. Amer. Math. Soc. {\bf 75} (3) 1953,
428--443.

\bibitem{Berger} M. Berger, \emph{Sur les groupes d'holonomie des vari\'et\'es \'a  connection affine et des vari\'et\'es riemanniennes}, Bull. Soc. Math. France {\bf 83} (1955), 279--330.

\bibitem{Bryant} R.L. Bryant, \emph{Metrics with exceptional holonomy},
Ann. Math. {\bf 126} (1987), 525--576.

\bibitem{Bryant2} R.L. Bryant, S.M. Salamon, \emph{On the Construction of some Complete Metrics with Exceptional
Holonomy}, Duke Math. Journ. {\bf 58} (1989), 829--850.


\bibitem{CF} D. Conti, M. Fern\'{a}ndez, \emph{Nilmanifolds with a calibrated
$G_2$-structure}, Differ. Geom.  Appl.  {\bf 29} (4) (2011),
493--506.





\bibitem{CCLS} V. Cortes, L. Schafer, T. Leistner, F. Schulte-Hengesbach, \emph{Half flat structures and special holonomy},  Proc.  London Math. Soc.  (3) {\bf 102} (2011)
113-158.


\bibitem{Freibert}M. Freibert,   \emph{Calibrated  and parallel  structures on almost abelian Lie algebras}, arXiv:1307.2542.

\bibitem{GW} C. R. Graham,  T. Wilse, \emph{Parallel tractor
extension and ambient metrics of holonomy split $G_2$}, J.
Differential Geom. {\bf 92} (3) (2012), 463--505.

\bibitem{Gray} A. Gray, \emph{Vector cross products on manifolds}, Trans. Amer. Math. Soc. {\bf 141}
(1969), 463--504.

\bibitem{Gong} M.P. Gong, \emph{Classification of nilpotent Lie algebras of
dimension 7 (over algebraically closed fields and R)}, Ph. D.
Thesis, University of Waterloo, Ontario, Canada, 1998.


%


\bibitem{Kath} I. Kath, \emph{$G^*_{2(2)}$-Structures on pseudo-Riemannian manifolds}, J. Geom. Phys. {\bf 27} (1998),
155--177.

\bibitem{Kath2} I. Kath, \emph{Indefinite symmetric spaces with $G^*_{2(2)}$-structure}, to appear in J. London Math. Soc.

\bibitem{Leistner-Nurowski} T. Leistner, P. Nurowski, \emph{Ambient metrics with exectional
holonomy}, Ann. Sc. Norm. Super. Pisa Cl. Sci. (5)  {\bf 11}
(2012), 407--436.


\bibitem{Malcev} A.I. Malcev, \emph{On a class of homogeneous spaces}, reprinted in Amer. Math. Soc.
Translations Series 1, 9 (1962), 276--307.

\bibitem{Nomizu}  K. Nomizu, \emph{On the cohomology of compact homogeneous spaces of nilpotent
Lie groups},  Ann. of Math. (2), {\bf 59} (1954), 531--538.


\bibitem{Sullivan} D. Sullivan, Differential Forms and the Topology of Manifolds, in Manifolds
Tokyo 1973, ed. A. Hattori, University of Tokyo Press 1975.

\bibitem{Willse} T. Willse, \emph{Highly symmetric 2-plane
fields on 5-manifolds and 5-dimensional Heisenberg group
holonomy}, Differ. Geom. Appl. {\bf 33} (2014), 81--111.



\end{thebibliography}
\end{document}